%
%
%
%
%
%
\documentclass[smallextended]{svjour3}       
\smartqed  
\usepackage{graphicx}
\usepackage{amsmath,amssymb}
\usepackage{braket}
\usepackage{bbm}

\usepackage{enumerate}
\newenvironment{thm}{\begin{theorem}}{\end{theorem}}

\newenvironment{lemm}{\begin{lemma}}{\end{lemma}}

\newenvironment{defi}{\begin{definition}}{\end{definition}}

\newcommand{\defarrow}{\stackrel{\mathrm{def.}}{\Leftrightarrow}}

\newcommand{\realn}{\mathbb{R}}



\newcommand{\pprime}{{\prime \prime}}
\newcommand{\aast}{{\ast \ast}}

\newcommand{\iin}{{i \in I}}
\newcommand{\jin}{{j \in J}}

\newcommand{\wsto}{\xrightarrow{\mathrm{weakly}\ast}}

\newcommand{\ikj}{{i(j)}}

\newcommand{\E}{\mathcal{E}}

%
\begin{document}

\title{Compatibility of any pair of $2$-outcome measurements characterizes
the Choquet simplex\thanks{This work was supported by Cross-ministerial 
Strategic Innovation Promotion Program (SIP)
(Council for Science, Technology and Innovation (CSTI)).}
}

\titlerunning{Compatibility of any measurements characterizes
the Choquet simplex}        

\author{Yui Kuramochi}


\institute{Yui Kuramochi \at
             Photon Science Center, Graduate School of Engineering,
             The University of Tokyo, 7-3-1 Hongo, Bunkyo-ku, Tokyo 113-8656, Japan \\
             Tel.: +81-03-5841-0859\\
              \email{kuramochi@qi.t.u-tokyo.ac.jp}           
}


\maketitle

\begin{abstract}
For a compact convex subset $K $ of a locally convex Hausdorff space, 
a measurement on $A(K)$ is 
a finite family of positive elements in $A(K)$ normalized to 
the unit constant $1_K , $
where $A(K)$ denotes the set of continuous real affine functionals on $K .$
It is proved that a compact convex set $K$ is a Choquet simplex 
if and only if any pair of $2$-outcome measurements are compatible,
i.e.\ the measurements are given as the marginals of a single measurement.
This generalizes the finite-dimensional result of
[Pl\'avala M 2016 Phys.\ Rev.\ A \textbf{94}, 042108]
obtained in the context of the foundations of quantum theory.
\keywords{Choquet simplex \and Bauer simplex \and general probabilistic theory %
\and compatibility of measurements
}
\subclass{46A55 
\and 46B40 
\and 81P16 
\and 81P15
}
\end{abstract}

\section{Introduction and main results} \label{sec:intro}
Let $K$ be a compact convex subset of a locally convex Hausforff space
over the reals $\realn$
and let $A(K)$ denote the set of continuous real affine functionals on $K,$
where $A(K)$ is a Banach space under the supremum norm $\| f \| :=
\sup_{x\in K} |f(x)| $ $(f \in A(K)).$
(Throughout the paper all the linear spaces are over $\realn.$)
A measurement on $A(K)$ is a finite sequence 
$(f_k)_{k=1}^m \in A(K)^m $
for some integer $m$ such that $f_k \geq 0 $ $(1 \leq k \leq m)$
and $\sum_{k=1}^m f_k = 1_K ,$
where $1_K \equiv 1 $ is the unit constant functional.
A measurement belonging to $A(K)^m$ is called $m$-outcome.
Two measurements $(f_k)_{k=1}^m$ and $(g_j)_{j=1}^n$ on $A(K)$ are called 
\textit{compatible} (or \textit{jointly measurable}) 
if there exists a family $(h_{k,j})_{1\leq k \leq m , 1 \leq j \leq n}$
in $A(K) $
such that 
\[
	h_{k,j} \geq 0 ,
	\quad
	f_k = \sum_{j^\prime=1}^n h_{k,j^\prime} ,
	\quad
	g_j = \sum_{k^\prime =1}^m h_{k^\prime,j} \quad( 1 \leq k \leq m , \, 1 \leq j\leq n) .
\]
In \cite{PhysRevA.94.042108} Pl\'avala showed that 
for finite-dimensional $K ,$ 
$K$ is a simplex 
if and only if
any pair of $2$-outcome measurements are compatible.
In the physical context, a compact convex set $K$ corresponds to a state space
of a general physical system and Pl\'avala\rq{}s result indicates
that incompatibility of measurements~\cite{1751-8121-49-12-123001}
characterizes non-classicality of a physical system.
The proof in \cite{PhysRevA.94.042108} depends on the notion of the maximal face
and is not straightforwardly applicable to infinite-dimensional compact sets.
The purpose of this paper is to generalize this result 
to an arbitrary compact convex set $K $ (Theorems~\ref{thm:main1} and \ref{thm:main2}).

The first main result of this paper is the following theorem.
\begin{thm} \label{thm:main1}
Let $K$ be a compact convex subset of a locally convex Hausdorff space.
Then $K$ is a Choquet simplex if and only if 
any pair $(f_k)_{k=1}^2$ and $(g_j)_{j=1}^2$ of $2$-outcome measurements on $A(K)$ 
are compatible.
\end{thm}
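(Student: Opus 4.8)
The plan is to recast compatibility of a pair of $2$-outcome measurements as an order‑interpolation problem in $A(K)$ and then to invoke the classical characterization of Choquet simplices through the Riesz interpolation (equivalently, decomposition) property. First I would note that a $2$-outcome measurement is nothing but a pair $(f,1_K-f)$ with $f\in A(K)$, $0\le f\le 1_K$, and that a direct computation identifies a joint measurement of $(f,1_K-f)$ and $(g,1_K-g)$: writing $h:=h_{1,1}$, the marginal conditions force $h_{1,2}=f-h$, $h_{2,1}=g-h$, $h_{2,2}=1_K-f-g+h$, so the two measurements are compatible if and only if there exists $h\in A(K)$ with
\[
	\max(f+g-1_K,\,0)\ \le\ h\ \le\ \min(f,g).
\]
Here $\max(f+g-1_K,0)$ is convex, $\min(f,g)$ is concave, both are continuous, and $\max(f+g-1_K,0)\le\min(f,g)$ holds automatically since $0\le f,g\le 1_K$.

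For the ``only if'' direction, suppose $K$ is a Choquet simplex. Then $A(K)$ has the Riesz interpolation property: whenever $a_1,a_2,b_1,b_2\in A(K)$ satisfy $a_i\le b_j$ for all $i,j$, there is $c\in A(K)$ with $a_i\le c\le b_j$ (one of the standard characterizations of simplices). Applying this with $a_1=f+g-1_K$, $a_2=0$, $b_1=f$, $b_2=g$ — the four inequalities $a_i\le b_j$ being immediate from $0\le f,g\le 1_K$ — produces an $h$ as above, so the two measurements are compatible.

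For the ``if'' direction I would argue the contrapositive. Assume $K$ is not a simplex; then $A(K)$ fails the Riesz decomposition property (equivalently, the interpolation property), so there are $a,b\in A(K)$ with $a,b\ge 0$ and $c\in A(K)$ with $0\le c\le a+b$ admitting no affine $h$ with $\max(c-b,0)\le h\le\min(a,c)$. Comparing with the reformulation above, this is precisely the failure of compatibility of $(f,1_K-f)$ and $(g,1_K-g)$ with $f:=a$, $g:=c$, \emph{provided} $a,c\in[0,1_K]$ and $c-b=a+c-1_K$, i.e.\ $a+b=1_K$. Rescaling $a,b,c$ by a common positive factor one may assume $a+b\le 1_K$ (whence $a,b,c\in[0,1_K]$); the issue is the slack $1_K-a-b\ge 0$, because the obvious move of replacing $b$ by $1_K-a$ enlarges exactly the upper bound $\min(a,c)$ is meant to control, and so need not preserve the obstruction.

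The main obstacle is therefore this last step: showing that the \emph{a priori} weaker ``$a+b=1_K$'' case of the Riesz decomposition property — which is exactly what compatibility of all pairs of $2$-outcome measurements asserts — already forces the full decomposition property of $A(K)$, and hence, by the cited characterization, that $K$ is a Choquet simplex. I expect this to require a careful combination of the admissible normalizations (scaling, translation, and tightening the inequalities $a_i\le b_j$, none of which can create interpolants), possibly supplemented by an iteration or a compactness argument using that $K$ is compact, rather than by a single substitution. Granting this, the two directions together with the equivalence ``$K$ is a Choquet simplex $\iff$ $A(K)$ has the Riesz interpolation property'' yield the theorem.
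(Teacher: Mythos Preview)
Your reformulation of compatibility as the interpolation condition $0,\,f+g-1_K\le h\le f,\,g$ is correct (it is the paper's Lemma~\ref{lemm:2out}), and your ``only if'' direction via Riesz interpolation is fine and matches the paper's argument. The genuine gap is precisely where you place it: you must show that compatibility of all $2$-outcome pairs---equivalently, Riesz decomposition in the special case $v_1+v_2=1_K$---implies the full Riesz decomposition property of $A(K)$, and you provide no argument beyond the expectation that ``normalizations \dots possibly supplemented by an iteration or a compactness argument'' will succeed. The elementary moves you list do not suffice: after scaling so that $v_1+v_2\le 1_K$, replacing $v_2$ by $1_K-v_1$ enlarges the admissible set for the interpolant in the wrong direction (as you observe), and no finite sequence of such substitutions repairs this. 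As it stands, the ``if'' direction is not proved.

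The paper does \emph{not} attempt this reduction inside $A(K)$. Instead it lifts the problem to the bidual: writing $E=A(K)$, one shows that compatibility of effects passes from $E$ to $E^{\ast\ast}$ by weak$^\ast$ density of $\mathcal{E}(E)$ in $\mathcal{E}(E^{\ast\ast})$ together with weak$^\ast$ compactness (Banach--Alaoglu) to extract a limit of the joint effects. Now $E^{\ast\ast}$ has the Banach predual $E^\ast$, so order intervals in $E^{\ast\ast}$ are weak$^\ast$ compact; this lets a Zorn's-lemma argument produce a maximal pair $(u_1,u_2)$ with $0\le u_j\le v_j$ and $u_1+u_2\le u$, and a short orthogonality lemma (using compatibility once more) forces $u_1+u_2=u$. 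Hence $E^{\ast\ast}$ has Riesz decomposition and is a lattice, and AM/AL duality then yields that $E^\ast=A(K)^\ast$ is a lattice, i.e.\ $K$ is a Choquet simplex. The compactness actually needed is weak$^\ast$ compactness in the bidual, not compactness of $K$, and this is the missing idea in your proposal.
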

Here a compact convex set $K$ is called a Choquet simplex if
the Banach dual $A(K)^\ast$ equipped with the dual positive cone 
\[
	A(K)^\ast_+ = \set{\psi \in A(K)^\ast | \psi (f) \geq 0 \, (\forall f \in A(K)_+)}
\]
is a lattice, where $ A(K)_+ := \set{f \in A(K) | f(x)\geq 0 \, (\forall x \in K)} .$

A Banach space $E$ is said to have a Banach predual $E_\ast$ if $E_\ast$ is 
a Banach space such that its Banach dual $(E_\ast)^\ast$ is isometrically isomorphic
to $E .$ We can and do take such $E_\ast ,$ if exists, as a closed linear subspace of 
the Banach dual $E^\ast  .$  
The second main result of this paper is then the following theorem.
\begin{thm} \label{thm:main2}
Let $K$ be a compact convex subset of a locally convex Hausdorff space.
Suppose that $A(K)$ has a Banach predual.
Then the following conditions are equivalent.
\begin{enumerate}[(i)]
\item
$K$ is a Bauer simplex.
\item
$K$ is a Choquet simplex.
\item
Any pair $(f_k)_{k=1}^2$ and $(g_j)_{j=1}^2$ of $2$-outcome measurements on $A(K)$
are compatible.
\end{enumerate}
\end{thm}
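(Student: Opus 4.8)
The plan is to derive Theorem~\ref{thm:main2} from Theorem~\ref{thm:main1} together with the interplay between the order of $A(K)$ and a weak-$\ast$ topology coming from the predual. The implications (i)$\Rightarrow$(ii) and (ii)$\Leftrightarrow$(iii) require nothing new: a Bauer simplex is in particular a Choquet simplex, and (ii)$\Leftrightarrow$(iii) is exactly Theorem~\ref{thm:main1}, which holds for an arbitrary compact convex set with no hypothesis on $A(K)$. Hence the whole content is the implication (ii)$\Rightarrow$(i): if $K$ is a Choquet simplex and $A(K)$ has a Banach predual $A(K)_\ast$, then $K$ is a Bauer simplex.

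First I would record standard facts about $A(K)$ viewed as an order-unit Banach space with order unit $1_K$: the given supremum norm is the order-unit norm, and the closed unit ball $B$ equals the order interval $[-1_K,1_K]$. Since $K$ is a Choquet simplex, $A(K)^\ast$ is a lattice, which is equivalent to $A(K)$ having the Riesz interpolation property. The observation that makes the predual bite is the affine identity $[0,1_K]=\tfrac{1}{2}\,1_K+\tfrac{1}{2}B$, which holds because $0\le f\le 1_K$ if and only if $\|2f-1_K\|\le1$. Now $A(K)=(A(K)_\ast)^\ast$, so $B$ is compact for $\sigma(A(K),A(K)_\ast)$ by Banach--Alaoglu, and therefore so is its affine image $[0,1_K]$. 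By the Krein--\v{S}mulian theorem, $A(K)_+=\bigcup_{n\in\natn}n[0,1_K]$ is $\sigma(A(K),A(K)_\ast)$-closed, because its intersection with any closed ball is a scalar multiple of $[0,1_K]$ and hence weak-$\ast$ compact.

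From the weak-$\ast$ closedness of $A(K)_+$ I would deduce that every decreasing net in $A(K)$ bounded below has an infimum. Indeed, a tail of such a net $(g_\alpha)$ is order-bounded, hence norm-bounded because the order-unit norm is monotone, so it has a weak-$\ast$ cluster point $h$; since $h$ lies in the weak-$\ast$ closure of each tail and $A(K)_+$ is weak-$\ast$ closed, $h\in g_\beta-A(K)_+$ for every $\beta$, so $h$ is a lower bound; and as every lower bound $h'$ satisfies $g_\alpha\in h'+A(K)_+$ for all $\alpha$, again by weak-$\ast$ closedness $h\in h'+A(K)_+$, i.e.\ $h\ge h'$. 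Thus $h=\inf_\alpha g_\alpha$. Combining this with the Riesz interpolation property shows that $A(K)$ is a vector lattice: for $f\in A(K)$ the set $D=\set{g\in A(K)|g\ge f,\ g\ge 0}$ of upper bounds of $\{f,0\}$ is downward directed by interpolation and bounded below, so $\inf D$ exists and is readily checked to equal the least upper bound $f\vee 0$; consequently $a\vee b=b+(a-b)\vee 0$ exists for all $a,b$. Finally, an order-unit Banach space that is a vector lattice is an abstract $M$-space with unit, hence order-isometric to $C(Y)$ for some compact Hausdorff $Y$; equivalently, it is classical that $A(K)$ is a vector lattice precisely when $K$ is a Bauer simplex. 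Either way $K$ is a Bauer simplex, as required.

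The step I expect to be the main obstacle is the one just dissolved: for a general ordered Banach space, possessing a Banach predual does not force the positive cone to be weak-$\ast$ closed, nor the space to be monotone complete. What makes everything go through here is the rigid identity $[0,1_K]=\tfrac{1}{2}\,1_K+\tfrac{1}{2}B$ linking the order interval to the weak-$\ast$ compact unit ball; spotting and exploiting it is the crux. The remaining ingredients --- the duality between Riesz interpolation and the lattice property of the dual, the passage from interpolation plus monotone completeness to a lattice, and the fact that a lattice $A(K)$ corresponds to a Bauer simplex --- are classical.
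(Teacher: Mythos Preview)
Your argument for (ii)$\Rightarrow$(i) is correct and in substance matches the paper's: the paper simply cites Alfsen's Proposition~II.3.2 together with the weak$\ast$ compactness of the sets $[x,z]\cap[y,z]$, and what you have written is essentially that proof spelled out (Riesz interpolation makes the set of upper bounds of $\{f,0\}$ downward filtering; monotone completeness, furnished by weak$\ast$ compactness of order intervals, gives it an infimum, which is $f\vee 0$; hence $A(K)$ is a lattice and $K$ is Bauer by Lemma~\ref{lemm:bauer}).

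The problem is your appeal to Theorem~\ref{thm:main1} for (ii)$\Leftrightarrow$(iii). In the paper's logical architecture this is circular: Theorem~\ref{thm:main1} (equivalently Theorem~\ref{thm:main1p}) is \emph{derived from} Theorem~\ref{thm:main2p}, by passing to the bidual $E^{\ast\ast}$, which has $E^\ast$ as a predual, and invoking (iii)$\Rightarrow$(i) there. The paper's substantive new step, which you have bypassed, is a direct proof of (iii)$\Rightarrow$(ii) under the predual hypothesis: given $0\le u\le v_1+v_2$, Zorn's lemma (Lemma~\ref{lemm:sup} supplies upper bounds for chains in $\{(x_1,x_2):0\le x_j\le v_j,\ x_1+x_2\le u\}$) produces a maximal pair $(u_1,u_2)$; maximality forces $u-u_1-u_2$ to be orthogonal to each $v_j-u_j$ in the sense of Definition~\ref{def:ortho}, and then compatibility, through Lemma~\ref{lemm:ortho}, shows that any state attaining $\|u-u_1-u_2\|$ vanishes on both $v_j-u_j$, contradicting $u\le v_1+v_2$ unless $u=u_1+u_2$. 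So the crux of Theorem~\ref{thm:main2} is not the implication you singled out but (iii)$\Rightarrow$(ii), and for that you still owe an argument independent of Theorem~\ref{thm:main1}.
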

Here $K$ is a Bauer simplex if the set $\partial_\mathrm{e} K$ of 
extremal points of $K$ is compact and any point in $K$ is the barycenter of 
a unique boundary measure \cite{alfsen1971compact}.

The above theorems are rephrased in terms of order-unit Banach space by using 
the following one-to-one correspondence 
(\cite{alfsen1971compact}, Sections~II.1 and II.2).
A pair $(E,e)$ of an Archimedean ordered linear space $E$ and 
an order unit $e \in E$ is called an order-unit Banach space
if the order-unit norm 
$\|\cdot \|$
on $E$ induced by $e$ is complete.
We write by $E_+$ the positive cone of $E .$
For each order-unit Banach space $(E, e)$ its dual $E^\ast$ is 
a base-normed ordered linear space with the positive cone
$E_+^\ast := \set{\phi \in E^\ast | \phi (a) \geq 0 \, (a \in E_+)}$
and the base
$S(E) := \set{\phi \in E_+^\ast | \| \phi \| = \phi (e) =1 } .$
Each element of $S(E)$ is called a state on $E .$
$S(E)$ is a weakly$\ast$ compact convex subset of $E^\ast$ and 
the order-unit Banach space $(A(S(E)) , 1_{S(E)} )$ is shown to be 
order and isometrically isomorphic to $(E,e) .$
Conversely for any compact convex subset $K$ of a locally convex Hausdorff space,
$(A(K) , 1_K)$ is an order-unit Banach space and 
$K$ is continuously affine isomorphic to the set $S(A(K))$ of states on $A(K) .$

We also define the ($m$-outcome) measurement on an order-unit Banach space
$(E, e)$ as a finite-sequence (belonging to $E^m$) of positive elements of $E$
normalized to $e.$ 
The compatibility of measurements is defined in the same way as 
that of measurements on $A(K) .$
By this correspondence we can readily see that
Theorems~\ref{thm:main1} and \ref{thm:main2} are respectively equivalent to

\begin{thm}
\label{thm:main1p}
Let $(E,e)$ be an order-unit Banach space.
Then $E^\ast$ is a lattice
if and only if 
any pair of $2$-outcome measurements $(a_j)_{j=1}^2$ and $(b_k)_{k=1}^2$ on $(E,e)$
are compatible.
\end{thm}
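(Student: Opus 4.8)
The first move is to work with effects. A two-outcome measurement on $(E,e)$ is precisely an effect $a$, i.e.\ an element of the order interval $[0,e]:=E_+\cap(e-E_+)$, identified with $(f_1,f_2)=(a,e-a)$. For two of them, given by $a,b\in[0,e]$, a joint measurement $(h_{k,j})$ is parametrised by $c:=h_{1,1}$ through $h_{1,2}=a-c$, $h_{2,1}=b-c$, $h_{2,2}=e-a-b+c$, so the four positivity requirements become exactly $0\le c\le a$, $c\le b$, $a+b-e\le c$. Hence ``every pair of two-outcome measurements is compatible'' is equivalent to the restricted interpolation property
\[
(\mathrm{SIP})\qquad \forall\,a,b\in[0,e]\ \ \exists\,c\in E:\quad 0\le c\le a,\ \ c\le b,\ \ a+b-e\le c .
\]
On the order-theoretic side I would use the classical fact that, for an order-unit Banach space $(E,e)$, the dual $E^\ast$ is a vector lattice --- equivalently $S(E)$ is a Choquet simplex --- if and only if $E$ has the Riesz decomposition property, equivalently the Riesz interpolation property (RIP): whenever $a_1,a_2\le b_1,b_2$ in $E$ there is $w$ with $a_i\le w\le b_j$ for all $i,j$ \cite{alfsen1971compact}. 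So Theorem~\ref{thm:main1p} reduces to the equivalence of SIP and RIP.

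One implication is immediate: given RIP, apply it to the lower family $\{0,\,a+b-e\}$ and the upper family $\{a,b\}$ --- admissible since $a,b\ge 0$ and, because $a,b\le e$, also $a+b-e\le a$ and $a+b-e\le b$ --- and the interpolant produced is exactly the $c$ demanded by SIP. Thus ``$E^\ast$ a vector lattice'' implies ``every pair of two-outcome measurements compatible''.

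The substance is the converse, SIP $\Rightarrow$ RIP, and here the finite-dimensional maximal-face argument of \cite{PhysRevA.94.042108} does not transfer. First I would reduce to order intervals: an affine change of variables $x\mapsto\lambda x+t$ with $\lambda>0$, available because $e$ is an order unit, sends arbitrary data $a_1,a_2\le b_1,b_2$ into $[0,e]$ and carries interpolants to interpolants, so it suffices to treat the case with all four elements in $[0,e]$. By the substitution used above (now with $w$ in place of $c$: $h_{1,1}=w-a_1$, $h_{1,2}=b_1-w$, $h_{2,1}=b_2-w$, $h_{2,2}=w-a_2$), the existence of such a $w$ is equivalent to the compatibility of the two \emph{positive} families $(b_1-a_1,\,b_2-a_2)$ and $(b_2-a_1,\,b_1-a_2)$, both of which sum to $s:=b_1+b_2-a_1-a_2$ with $0\le s\le e$. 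The hypothesis SIP, however, only provides compatibility of families normalised to the full unit $e$, and the essential difficulty is to bridge the ``slack'' $e-s\ge 0$, i.e.\ to bootstrap the restricted interpolation of SIP up to full interpolation; I expect the completeness of $E$ to enter here in an essential way. The strategy I would pursue is to construct, by repeatedly invoking SIP and recycling the undecomposed remainders, a norm-Cauchy sequence of approximate decompositions of the relevant effect and to pass to the limit using completeness. Once RIP --- equivalently the Riesz decomposition property --- is available, the Riesz--Kantorovich prescription $(\phi\vee 0)(a):=\sup\{\phi(c):0\le c\le a\}$ for $a\in E_+$ is additive, extends to a positive linear functional realising $\phi\vee 0$ in $E^\ast$, and so exhibits $E^\ast$ as a vector lattice. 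This proves Theorem~\ref{thm:main1p}, and the statements for $A(K)$ in the equivalent formulation follow through the order-unit/compact-convex correspondence recalled above.
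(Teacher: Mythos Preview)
Your reduction to effects and the equivalence with the restricted interpolation property (SIP) is correct, and your ``only if'' direction via Riesz interpolation matches the paper's argument for \eqref{i2}$\Rightarrow$\eqref{i3}.

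The gap is in the converse. You correctly isolate the real difficulty---lifting SIP, which only interpolates between $\{0,a+b-e\}$ and $\{a,b\}$ for $a,b\in[0,e]$, to the full Riesz interpolation property---but you do not actually prove it. The proposed ``recycle remainders and build a Cauchy sequence'' scheme is left as a hope: you give no mechanism that forces the undecomposed part to shrink geometrically (or even at all) under repeated applications of SIP, and without such a contraction there is no Cauchy sequence to speak of. Norm completeness alone does not obviously suffice here; one needs some compactness to pin down a limit of the approximate decompositions.

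The paper's route is quite different from your plan. Rather than attacking RIP in $E$ directly, it first transports SIP from $E$ to the bidual $E^{\ast\ast}$: the effects $\mathcal{E}(E)$ are weakly$\ast$ dense in $\mathcal{E}(E^{\ast\ast})$ (a Hahn--Banach separation argument), so for $a'',b''\in\mathcal{E}(E^{\ast\ast})$ one approximates by $a_i,b_i\in\mathcal{E}(E)$, takes $c_i$ from SIP, and extracts a weak$\ast$ cluster point $c''$ via Banach--Alaoglu. Now $(E^{\ast\ast},e)$ has the Banach predual $E^{\ast}$, and in that setting (Theorem~\ref{thm:main2p}) the Riesz decomposition is obtained by a Zorn's-lemma argument: one maximises pairs $(u_1,u_2)$ with $0\le u_j\le v_j$ and $u_1+u_2\le u$ using weak$\ast$ compactness of order intervals, shows the defect $u-u_1-u_2$ is orthogonal to both $v_j-u_j$, and then uses SIP once more (through an orthogonality lemma) to force the defect to vanish. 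Finally, $E^{\ast\ast}$ being a lattice makes it an AM-space, whence $E^{\ast}$ is an AL-space and in particular a lattice. So the compactness you would need enters not through norm completeness of $E$ but through the weak$\ast$ topology on the bidual.
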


\begin{thm}
\label{thm:main2p}
Let $(E,e)$ be an order-unit Banach space.
Suppose that $E$ has a Banach predual $E_\ast . $
Then the following conditions are equivalent.
\begin{enumerate}[(i)]
\item \label{i1}
$S(E)$ is a Bauer simplex.
\item \label{i2}
$E^\ast$ is a lattice.
\item \label{i3}
Any pair of $2$-outcome measurements $(a_j)_{j=1}^2$ and $(b_k)_{k=1}^2$ on $(E,e)$
are compatible.
\end{enumerate}
\end{thm}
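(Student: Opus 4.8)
The plan is to prove Theorem~\ref{thm:main2p}; Theorem~\ref{thm:main2} is then an immediate translation via the correspondence recalled above. Since (\ref{i2})$\Leftrightarrow$(\ref{i3}) is precisely Theorem~\ref{thm:main1p}, and (\ref{i1})$\Rightarrow$(\ref{i2}) is trivial (a Bauer simplex is a Choquet simplex, so $E^\ast\cong A(S(E))^\ast$ is a lattice), the whole content is the implication (\ref{i2})$\Rightarrow$(\ref{i1}), which is where the Banach predual $E_\ast$ enters.

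The key preliminary observation is that \emph{every} order-unit Banach space $(E,e)$ with a Banach predual is automatically monotone complete. Indeed, $\|a-\tfrac12 e\|\le\tfrac12$ holds if and only if $0\le a\le e$, so the order interval $[0,e]$ is the closed ball of centre $\tfrac12 e$ and radius $\tfrac12$, hence $\sigma(E,E_\ast)$-compact by the Banach--Alaoglu theorem; since $\set{a\in E_+ | \|a\|\le r}=r[0,e]$ for every $r>0$, the Krein--\v{S}mulian theorem shows that $E_+$ is $\sigma(E,E_\ast)$-closed, and the bipolar theorem then gives $E_+=\set{a\in E | \psi(a)\ge 0\ (\forall\psi\in E_{\ast,+})}$, where $E_{\ast,+}:=E_\ast\cap E^\ast_+$. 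Consequently, if $(a_i)$ is an increasing net bounded above by $b$, it lies in the $\sigma(E,E_\ast)$-compact interval $[a_{i_0},b]$ and so has a weak$\ast$ cluster point $a$; for $\psi\in E_{\ast,+}$ the real net $(\psi(a_i))$ is increasing and bounded, hence convergent, so $\psi(a)=\sup_i\psi(a_i)$, and the above description of $E_+$ forces $a=\sup_i a_i$.

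Now assume (\ref{i2}). It is classical that, for an order-unit Banach space, $E^\ast$ is a lattice if and only if $E$ has the Riesz interpolation property (equivalently, $S(E)$ is a Choquet simplex); see \cite{alfsen1971compact}. Interpolation together with the monotone completeness established above forces $E$ to be a vector lattice: given $a_1,a_2\le b$, interpolation makes $\set{c\in E | c\ge a_1,\, c\ge a_2,\, c\le b}$ downward directed, and its infimum, provided by monotone completeness, is readily checked to be $a_1\vee a_2$. An order-unit Banach space that is a vector lattice is an $AM$-space with unit, so by Kakutani's representation theorem $(E,e)$ is order and isometrically isomorphic to $(C(Y),1_Y)$ for some compact Hausdorff $Y$; hence $S(E)$ is affinely homeomorphic to $S(C(Y))$, the simplex of probability Radon measures on $Y$, which is a Bauer simplex. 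This establishes (\ref{i2})$\Rightarrow$(\ref{i1}).

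The step I expect to be the crux is the monotone-completeness argument: one must be sure that the weak$\ast$-closedness of $E_+$ holds for an \emph{arbitrary} predual (this is exactly what the ``$[0,e]$ is a ball'' remark secures) and that a weak$\ast$ cluster point of a bounded monotone net really is its least upper bound. Once monotone completeness is in hand, combining it with Riesz interpolation to obtain a lattice, and then invoking Kakutani's representation theorem, are routine.
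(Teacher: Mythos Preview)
Your argument for (\ref{i2})$\Rightarrow$(\ref{i1}) is correct and is essentially the paper's: both observe that the predual makes $E$ monotone complete (this is Lemma~\ref{lemm:sup} in the paper, and your Krein--\v{S}mulian/Banach--Alaoglu derivation is a clean version of the same thing), combine this with Riesz interpolation to conclude that $E$ is a vector lattice, and then deduce that $S(E)$ is Bauer. The paper invokes Lemma~\ref{lemm:bauer} at the last step rather than Kakutani, but the substance is identical.

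The problem is your handling of (\ref{i2})$\Leftrightarrow$(\ref{i3}). You defer it to Theorem~\ref{thm:main1p}, but in the paper the logical order is the reverse: Theorem~\ref{thm:main2p} is proved \emph{first}, and Theorem~\ref{thm:main1p} is then deduced from it by passing to the bidual $E^{\ast\ast}$ (which always has predual $E^\ast$), using the weak$\ast$ density of $\mathcal{E}(E)$ in $\mathcal{E}(E^{\ast\ast})$ to transfer compatibility upward. So invoking Theorem~\ref{thm:main1p} here is circular.

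More importantly, this means you have misidentified where the work lies. The hard implication in Theorem~\ref{thm:main2p} is not (\ref{i2})$\Rightarrow$(\ref{i1}) but (\ref{i3})$\Rightarrow$(\ref{i2}), and the predual enters there just as essentially. The paper's proof of (\ref{i3})$\Rightarrow$(\ref{i2}) runs Zorn's lemma on the set $A=\{(x_1,x_2)\in E\times E:\ 0\le x_j\le v_j,\ x_1+x_2\le u\}$ to extract a maximal pair $(u_1,u_2)$; the existence of upper bounds for chains in $A$ needs exactly the monotone completeness supplied by Lemma~\ref{lemm:sup}, hence the predual. One then shows $u-u_1-u_2$ is orthogonal to each $v_j-u_j$ and uses the compatibility hypothesis (via Lemma~\ref{lemm:ortho}) to force $u=u_1+u_2$, establishing the Riesz decomposition property. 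Your proposal supplies the easier half and delegates the harder half to a theorem whose only known proof rests on what you are proving.
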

In the rest of this paper, after introducing some preliminaries in Section~\ref{sec:prel}, 
we prove in Section~\ref{sec:proof}
the main results Theorems~\ref{thm:main1p} and \ref{thm:main2p}.

\section{Preliminary} \label{sec:prel}
This section reviews necessary results of order-unit Banach spaces with 
Banach preduals and those of simplexes. 
The reader is referred to \cite{alfsen1971compact} and 
\cite{Ellis1964duality,Olubummo1999} for the 
complete proofs of the facts given in this section.

Let $(E,e)$ be an order-unit Banach space with a Banach predual $E_\ast .$
Then the predual $E_\ast$ endowed with the predual positive cone
\[
	E_{\ast +} 
	:=
	\set{\psi \in E_\ast | \psi (a) \geq 0 \, (\forall a \in E_+)}
\]
has the base 
\[
	S_\ast (E)
	:=
	\set{\psi \in E_{\ast +} | \psi(e) =1 }
	=E_{\ast} \cap S(E)
\]
and the base norm on $E_\ast$ induced by $S_\ast (E)$ coincides with the 
predual norm 
\[
\| \psi \| = \sup_{a \in E \colon \| a \| \leq 1} | \psi (a) |
\quad
(\psi \in E_\ast),
\]
which is the norm on $E^\ast$ restricted to $E_\ast $
(\cite{Ellis1964duality}, Theorem~6; \cite{Olubummo1999}).
Moreover the positive cone $E_+$ is weakly$\ast$ closed
(i.e.\ $\sigma (E,E_\ast)$-closed) 
(\cite{Olubummo1999}, in the proof).
Therefore by the bipolar theorem $E_+$ is the dual cone of $E_{\ast +} ,$ i.e.\
\begin{equation}
	E_+ 
	=\set{a \in E | \psi (a) \geq 0 \, (\forall \psi \in E_{\ast +})} .
	\notag
\end{equation}
By the Banach-Alaoglu theorem, the unit ball of $E$ is weakly$\ast$ compact
and hence so is any interval 
\[
	[a,b] 
	:=
	\set{x \in E | a \leq x \leq b}
	\quad
	(a,b \in E).
\]

We have seen that a Banach predual $E_\ast$ of an order-unit Banach space $E$ is 
a base-normed Banach space and the positive cone of $E$ is the dual cone of that of
$E_\ast .$
We can conversely show that the Banach dual $(E_\ast)^\ast $ of a 
base-normed Banach space $E_\ast$ 
is an order-unit Banach space, where the positive cone 
of $(E_\ast)^\ast$ is the dual cone of that of $E_\ast $ 
(\cite{Ellis1964duality}; \cite{alfsen1971compact}, II.1.15).
Thus we can start either from an order-unit Banach space $E$ with a Banach predual
or from a base-normed space $E_\ast ,$
and these definitions result in effectively the same concept.
We exploit the former definition since the main 
Theorems~\ref{thm:main2} and \ref{thm:main2p} are stated in terms of this.

A net $(x_i)_{\iin}$ in an ordered linear space $E$ is called bounded if $x_i \leq a$ 
$(\forall \iin)$ for some $a \in E$
and increasing if $i \leq i^\prime$ implies $x_i \leq x_{i^\prime} $
$(i,i^\prime \in I).$
We denote the supremum of the subset $\set{x_i | \iin}$ of $E$
by $\sup_\iin x_i  $ if it exists.

The following lemma will be used in the proof of Theorem~\ref{thm:main2p}.
\begin{lemm} \label{lemm:sup}
Let $(E, e)$ be an order-unit Banach space with a Banach predual
$E_\ast .$
Then any bounded increasing net $(x_i)_\iin$ in $E$ is weakly$\ast$ convergent
to the supremum $\sup_\iin x_i .$
\end{lemm}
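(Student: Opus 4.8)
The plan is to exploit weak$*$ compactness to produce a candidate limit and then use the order-theoretic structure (in particular the fact, recorded above, that $E_+$ is weakly$*$ closed) to identify that limit simultaneously as the weak$*$ limit of the whole net and as the supremum. First I would fix $a \in E$ with $x_i \le a$ for all $\iin$; fixing also some index $i_0$, the tail $\{x_i \mid i \ge i_0\}$ lies in the order interval $[x_{i_0}, a]$, which is weakly$*$ compact by the Banach--Alaoglu argument quoted in the preliminaries. Hence the net $(x_i)_\iin$ has a subnet converging weakly$*$ to some $x \in E$. I will show this $x$ works.

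The key step is to check that $x_j \le x$ for every $\jin$ and that $x$ is the least such upper bound. For the first claim, fix $\jin$. For every $i \ge j$ we have $x - x_i \in \{y \in E \mid y \le x - x_j\}$ is not quite the right set; instead I argue directly: the net $(x_i)_{i \ge j}$ is eventually $\ge x_j$, so $x_i - x_j \in E_+$ eventually, and since a subnet of this net converges weakly$*$ to $x - x_j$ and $E_+$ is weakly$*$ closed, we get $x - x_j \in E_+$, i.e. $x_j \le x$. Thus $x$ is an upper bound of $\{x_i \mid \iin\}$. For minimality, suppose $b \in E$ satisfies $x_i \le b$ for all $\iin$; then $b - x_i \in E_+$ for all $i$, and passing to the weak$*$-convergent subnet and again using that $E_+$ is weakly$*$ closed yields $b - x \in E_+$, i.e. $x \le b$. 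Therefore $x = \sup_\iin x_i$.

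It remains to upgrade subnet convergence to convergence of the full net, and this is the point that needs a little care rather than being automatic. Here I would use monotonicity together with the characterization $E_+ = \{a \in E \mid \psi(a) \ge 0\ (\forall \psi \in E_{\ast+})\}$: for each fixed $\psi \in S_\ast(E)$ the real net $(\psi(x_i))_\iin$ is increasing (since $\psi \ge 0$ and the $x_i$ increase) and bounded above by $\psi(a)$, hence convergent in $\realn$, necessarily to $\sup_\iin \psi(x_i)$. Since $x = \sup_\iin x_i$ and $\psi$ is positive and weakly$*$ continuous, $\psi(x) \ge \psi(x_i)$ for all $i$, and the subnet convergence forces $\psi(x) = \lim_{\text{subnet}} \psi(x_i) = \sup_\iin \psi(x_i)$; so the full real net $(\psi(x_i))_\iin$ converges to $\psi(x)$. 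As the positive states in $S_\ast(E)$ span $E_\ast$ (every element of a base-normed space is a difference of positive multiples of base elements), $\psi(x_i) \to \psi(x)$ for all $\psi \in E_\ast$, i.e. $x_i \xrightarrow{\text{weakly}\ast} x = \sup_\iin x_i$, as claimed. The main obstacle, such as it is, is precisely this last passage from a convergent subnet to the convergent net; everything else is a direct application of weak$*$ closedness of $E_+$ and weak$*$ compactness of order intervals.
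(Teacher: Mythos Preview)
Your proposal is correct and follows essentially the same route as the paper: extract a weak$^\ast$-convergent subnet from a compact order interval, use weak$^\ast$ closedness of $E_+$ to identify the subnet limit as $\sup_\iin x_i$, and then upgrade to full-net convergence by testing against positive predual functionals (whose span is $E_\ast$) via monotone convergence in $\realn$. The only cosmetic difference is that the paper compresses the last step into the single chain $\psi(x) \ge \sup_\iin \psi(x_i) \ge \sup_\jin \psi(x_{i(j)}) = \psi(x)$, whereas you spell out the monotone-bounded-net argument in $\realn$ explicitly.
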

\begin{proof}
Take an element $a \in E$ such that $x_i \leq a$
$(\iin ) .$ For a fixed $i_0 \in I$ we have $x_i \in [x_{i_0} , a]$ eventually
and by the compactness of $ [x_{i_0} , a]$ there exists a subnet 
$(x_\ikj)_\jin$ weakly$\ast$ converging to some $x \in  [x_{i_0} , a] .$ 
Since $x_i \leq x_\ikj$ eventually for each $\iin ,$
the weak$\ast$ closedness of the positive cone $E_+$ implies 
$x_i \leq x$ for all $\iin .$
Moreover if $x_i \leq b \in E$ $(\iin) ,$
then again by the weak$\ast$ closedness of $E_+$ we have $x \leq b .$
Therefore $x = \sup_{\iin} x_i .$ 
Take arbitrary $\psi \in E_{\ast +} .$
Then $(\psi (x_i))_\iin$ is an increasing net in $\realn$
and 
\[
	\psi(x) \geq \sup_\iin \psi(x_i) \geq \sup_\jin \psi (x_\ikj) = \psi (x) ,
\]
which implies $\psi (x_i) \uparrow \psi (x) .$
Since $E_\ast$ is the linear span of $E_{\ast +} ,$
this implies $x_i \wsto x .$
\qed \end{proof}

Let $(E,e)$ be an order-unit Banach space.
We define the linear order $\leq $ on the
double dual space $E^\aast$ corresponding to the double dual cone
\[
	E^\aast_+ := \set{x^\pprime \in E^\aast | \braket{x^\pprime , \psi} \geq 0 \,
	( \forall \psi \in E_+^\ast )} ,
\]
where we introduced the notation $\braket{x^\pprime , \psi} := x^\pprime (\psi)$
$(x^\pprime \in E^\aast , \psi \in E^\ast) .$
As usual we regard $E$ as a norm-closed subspace of $E^\aast.$
Then  by the bipolar theorem we have $E_+^\aast \cap E = E_+ , $ 
which implies that the order on $E^\aast$ restricted to $E$ coincides with 
the original order on $E .$
Moreover $(E^\aast , e)$ is an order-unit Banach space with the Banach predual
$E^\ast$ and $E_+^\aast$ is the weak$\ast$ closure 
of $E_+$ by the bipolar theorem.
Let us introduce the sets
\begin{gather*}
	\E (E) := \set{x \in E | 0 \leq x \leq e} ,
	\\
	\E (E^\aast) := \set{x^\pprime \in E^\aast | 0 \leq x^\pprime \leq e} .
\end{gather*}
Each element of $\E (E)$ or $\E (E^\aast)$ is called an \textit{effect}.
\begin{lemm} \label{lemm:dense}
Let $(E,e)$ be an order-unit Banach space.
Then $\E (E)$ is weakly$\ast$ dense in $\E (E^\aast) .$
\end{lemm}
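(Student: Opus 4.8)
The plan is to realize $\E(E^\aast)$ as a weak$\ast$-compact convex set, identify $\E(E)$ with a convex subset of it, and then invoke the Hahn--Banach separation theorem together with the bipolar theorem to show density. Concretely, suppose for contradiction that the weak$\ast$ closure $C$ of $\E(E)$ in $E^\aast$ is a proper subset of $\E(E^\aast)$, and pick $x^\pprime_0 \in \E(E^\aast) \setminus C$. Since $C$ is weak$\ast$-closed and convex (being the closure of a convex set) and $E^\aast$ with the weak$\ast$ topology $\sigma(E^\aast, E^\ast)$ is a locally convex Hausdorff space, the separating hyperplane theorem gives a weak$\ast$-continuous functional, i.e.\ an element $\psi \in E^\ast$, and a scalar $\alpha$ with $\braket{x, \psi} \leq \alpha < \braket{x^\pprime_0, \psi}$ for all $x \in \E(E)$.

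The next step is to analyze the separating functional $\psi$. Write $\psi = \psi_+ - \psi_-$ with $\psi_\pm \in E^\ast_+$, which is possible because $E^\ast$ is base-normed (hence its positive cone is generating). Taking the supremum of $\braket{x,\psi}$ over $x \in \E(E)$: since $\E(E)$ contains $0$ and $e$ and is "rich enough" (e.g.\ $\E(E)$ is $\sigma(E,E^\ast)$-dense in $\E(E^\aast)$ would be circular, so I must argue directly), I expect $\sup_{x \in \E(E)} \braket{x,\psi}$ to equal $\psi_+(e) = \| \psi_+ \|$, or more precisely the value $\braket{y, \psi}$ maximized over $y \in \E(E^\aast)$. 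The cleanest route: observe that the evaluation $x^\pprime \mapsto \braket{x^\pprime, \psi}$ on $E^\aast$ attains its maximum over the weak$\ast$-compact set $\E(E^\aast)$; call this maximum $M$. On the other hand $\sup_{x \in \E(E)}\braket{x,\psi} \leq \alpha$. I will show these two suprema coincide, which contradicts $\braket{x^\pprime_0,\psi} > \alpha$ since $x^\pprime_0 \in \E(E^\aast)$ gives $\braket{x^\pprime_0, \psi} \leq M \leq \alpha$.

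To show $\sup_{x \in \E(E)} \braket{x, \psi} = \sup_{x^\pprime \in \E(E^\aast)} \braket{x^\pprime, \psi}$, the key observation is that for any $\psi \in E^\ast$ this supremum is an intrinsic quantity computable in $E$ alone: decomposing $\psi = \psi_+ - \psi_-$ with $\psi_\pm \in E^\ast_+$, one has $\braket{x, \psi} = \psi_+(x) - \psi_-(x) \le \psi_+(x)$ for $x \in E_+$, so $\sup_{x \in \E(E)}\braket{x,\psi} \le \psi_+(e)$, and this bound is approached by taking $x \in \E(E)$ with $\psi_-(x)$ small and $\psi_+(x)$ near $\psi_+(e)$ — but producing such $x$ requires care. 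It is more robust to use the Jordan-type decomposition in the base-normed dual: there exists a decomposition $e = $ (realized at the level of $\psi$) giving $\sup_{x\in[0,e]}\psi(x) = \psi_+(e)$ where $\psi_+$ is the positive part, and identically $\sup_{x^\pprime \in [0,e] \subseteq E^\aast} \braket{x^\pprime,\psi} = \psi_+(e)$ since the same Jordan decomposition of $\psi \in E^\ast = (E^\aast)_\ast$ applies. Thus both suprema equal $\psi_+(e)$ and the contradiction is complete, proving $C = \E(E^\aast)$.

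The main obstacle I anticipate is the computation $\sup_{x \in \E(E)} \psi(x) = \psi_+(e)$, i.e.\ that the positive part of the functional is actually "seen" by the order interval $[0,e]$ inside $E$ and not only by $[0,e]$ inside $E^\aast$; this is where one genuinely uses that $E^\ast$ is base-normed so that $\psi$ admits a minimal decomposition $\psi = \psi_+ - \psi_-$ with $\| \psi \| = \psi_+(e) + \psi_-(e)$, and that evaluating such a decomposition does not require passing to the bidual. Once that identity is in hand, the separation argument closes immediately.
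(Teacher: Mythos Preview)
Your separation skeleton is correct and is exactly how the paper begins. The difference lies in how you close the argument after obtaining $\psi \in E^\ast$ with $\sup_{x \in \E(E)} \braket{x,\psi} < \braket{x^\pprime_0, \psi}$. You aim to show that both suprema (over $\E(E)$ and over $\E(E^\aast)$) equal $\psi_+(e)$ for the minimal base-norm decomposition $\psi = \psi_+ - \psi_-$. The paper bypasses the decomposition entirely: since the closed unit ball of an order-unit space is $\{-e + 2x : x \in \E(E)\}$ (and likewise for $E^\aast$), the strict separation inequality rewrites as
\[
\|\psi\| \;=\; \sup_{x \in \E(E)} \braket{-e+2x, \psi} \;<\; \braket{-e+2x^\pprime_0, \psi} \;\leq\; \|{-e+2x^\pprime_0}\|\,\|\psi\| \;\leq\; \|\psi\|,
\]
an immediate contradiction. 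Your route does work --- one checks $\sup_{x \in [0,e]\cap E} \psi(x) = \tfrac{1}{2}(\|\psi\| + \psi(e)) = \psi_+(e)$, and the identical formula holds over $[0,e] \subset E^\aast$ because $\psi$ has the same norm computed against $E$ or against $E^\aast$ --- but notice that this verification \emph{is} precisely the unit-ball computation the paper uses directly. The Jordan decomposition is therefore a detour: after the affine shift $x \mapsto -e+2x$, the equality of the two suprema is nothing but the equality of dual and bidual norms of $\psi$, which is automatic.
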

\begin{proof}
Suppose that $\E (E)$ is not weakly$\ast$ dense in $\E (E^\aast) .$
Then by the Hahn-Banach separation theorem, there exist $\psi \in E^\ast$
and $x^\pprime \in \E (E^\aast)$
such that
\[
	\sup_{x\in \E(E)} \braket{x,\psi} 
	< \braket{x^\pprime , \psi} .
\]
Since the unit ball of $E$ coincides with
\[
	\set{-e + 2x | x \in \E (E)},
\]
we have
\[
	\| \psi \|
	= 
	\sup_{x\in \E (E)}
	\braket{-e+2x , \psi}
	< 
	\braket{-e + 2x^\pprime , \psi}
	\leq \| -e + 2x^\pprime \| \| \psi \|
	\leq \| \psi \| ,
\]
which is a contradiction.
\qed \end{proof}

The following well-known characterizations of the Choquet and Bauer 
simplexes are also necessary.

\begin{lemm}[\cite{alfsen1971compact}, II.3.1 and II.3.11] \label{lemm:choquet}
Let $(E,e)$ be an order-unit Banach space.
Then $E^\ast$ is a lattice if and only if $E$ has the Riesz decomposition 
property, i.e.\
\begin{align*}
	&0 \leq u \leq v_1 + v_2 , \quad 0 \leq  v_1 , v_2 
	\\ &\implies
	\exists u_1 , u_2\in E \colon
	0 \leq u_j \leq v_j \, (j=1,2) ,
	\quad
	u = u_1 + u_2 
\end{align*}
holds for any $u, v_1 , v_2 \in E .$
\end{lemm}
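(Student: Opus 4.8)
The plan is to prove the two implications separately; the forward one is a short construction, and the converse carries all the weight.

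\emph{Forward direction} ($E$ has the Riesz decomposition property $\Longrightarrow$ $E^\ast$ is a lattice). I would use the Riesz--Kantorovich construction. For $\phi \in E^\ast$ set
\[
	\phi^+(x) := \sup\set{\phi(y) | 0 \le y \le x} \qquad (x \in E_+).
\]
The supremum is finite because $0 \le y \le x$ gives $-\|x\| e \le y \le \|x\| e$, hence $\|y\| \le \|x\|$ and $|\phi(y)| \le \|\phi\|\,\|x\|$; it is positively homogeneous; and it is additive on $E_+$ — this last point is the sole use of the Riesz decomposition property, applied to split a given $y$ with $0 \le y \le x_1 + x_2$ as $y = y_1 + y_2$, $0 \le y_j \le x_j$. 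Since $e$ is an order unit, $E = E_+ - E_+$, so $\phi^+$ extends uniquely to a linear functional on $E$, and it is bounded ($|\phi^+(a)| \le 2\|\phi\|$ for $\|a\| \le 1$, writing $a + e \in [0,2e]$), whence $\phi^+ \in E^\ast_+$. One then checks $\phi^+ = \phi \vee 0$: it dominates $\phi$ and $0$, while any $\chi \in E^\ast$ with $\chi \ge \phi$ and $\chi \ge 0$ satisfies $\chi(x) \ge \phi(y)$ for all $0 \le y \le x$, hence $\chi \ge \phi^+$ on $E_+$ and therefore on $E$. Since $\phi \vee \psi = (\phi-\psi)^+ + \psi$, $E^\ast$ is a lattice.

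\emph{Converse} ($E^\ast$ is a lattice $\Longrightarrow$ $E$ has the Riesz decomposition property). I would argue in the state-space picture $E \cong A(K)$ with $K = S(E)$. The Riesz decomposition property of $E$ is equivalent to the Riesz interpolation property, and by passing to pointwise maxima and minima the interpolation problem reduces to the following separation statement: for a continuous convex $f$ and a continuous concave $g$ on $K$ with $f \le g$, there exists a continuous affine $h$ with $f \le h \le g$. I would prove this by a Hahn--Banach/sandwich argument on the ordered space: manufacture a sublinear functional $p$ on $E$ out of $g$ and a superlinear functional $q$ out of $f$, verify the crucial inequality $q \le p$ — this is exactly the step where one uses that the cone $E^\ast_+$ (equivalently, the cone generated by $K$) is a \emph{lattice} cone — and then invoke Hahn--Banach to obtain a linear functional lying between $q$ and $p$, which yields the affine interpolant $h$. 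This is the Edwards separation theorem for simplexes.

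The main obstacle is this converse, and specifically producing the interpolant \emph{inside} $E$. The naive attempt does not close: one can solve the decomposition in the bidual — since $E^\ast$ is a lattice, its Banach dual $E^\aast$ is a vector lattice, so for $0 \le u \le v_1 + v_2$ in $E$ one may take $u_1 := u \wedge v_1$ and $u_2 := u - u_1 = (u \vee v_1) - v_1$, with $0 \le u_j \le v_j$ in $E^\aast$ — but pushing $u_1$ back into $[0,v_1] \cap E$ by a weak$\ast$-density argument of the type of Lemma~\ref{lemm:dense} fails, both because weak$\ast$ limits of elements of $E$ need not stay in $E$ and, more essentially, because an approximant $y \in [0,v_1] \cap E$ of $u_1$ need not satisfy $u - y \in [0,v_2]$ although the limit does. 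So the genuine content sits in the cone Hahn--Banach step; being classical, I would either carry it out in full or invoke \cite{alfsen1971compact} (II.3.1 and II.3.11).
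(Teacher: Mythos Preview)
The paper does not prove this lemma at all: it is stated with the citation \cite{alfsen1971compact}, II.3.1 and II.3.11, and no argument is given. Your final fallback --- invoke \cite{alfsen1971compact} --- is therefore literally the paper's ``proof''.

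Your sketch goes well beyond that, and is sound. The Riesz--Kantorovich construction you give for $\phi^+$ is the standard proof that the Riesz decomposition property of $E$ forces $E^\ast$ to be a lattice, and your bookkeeping (boundedness, additivity via RDP, extension from $E_+$ to $E$) is correct. For the harder implication you correctly reduce Riesz decomposition to Riesz interpolation, and interpolation to the Edwards separation theorem on $K=S(E)$; this is exactly the route Alfsen takes in the cited sections. You are also right that the naive bidual shortcut ($u_1:=u\wedge v_1$ computed in $E^{\ast\ast}$) fails to close, for precisely the reason you give: there is no mechanism to pull $u_1$ back into $E$ while preserving both constraints simultaneously, and weak$\ast$ density of $\mathcal E(E)$ in $\mathcal E(E^{\ast\ast})$ does not help. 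One small slip of wording: in your Hahn--Banach step you speak of producing a sublinear functional ``on $E$'', but the sandwich argument for Edwards separation is carried out on the cone generated by $K$ inside $E^\ast$ (so that the resulting linear functional restricts to a continuous affine function on $K$, i.e.\ an element of $A(K)\cong E$); the lattice hypothesis on $E^\ast_+$ is used exactly where you say, to verify the domination $q\le p$.
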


\begin{lemm}[\cite{alfsen1971compact}, II.4.1] \label{lemm:bauer}
Let $(E,e)$ be an order-unit Banach space.
Then $S(E)$ is a Bauer simplex if and only if $E$ is a lattice.
\end{lemm}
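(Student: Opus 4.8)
The plan is to exploit the canonical isometric order isomorphism $E \cong A(S(E))$, so that, writing $K := S(E)$, proving ``$E$ is a lattice'' amounts to proving ``$A(K)$ is a lattice,'' together with two representation-theoretic facts: Kakutani's representation theorem for $M$-spaces in one direction, and the barycenter calculus of boundary measures on a Bauer simplex in the other. I would treat the two implications separately.

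For the implication ``$E$ a lattice $\Rightarrow$ $S(E)$ a Bauer simplex,'' first I would observe that the order-unit norm of a lattice satisfies the $M$-space identity: for $a,b \in E_+$ one has $a \vee b \leq \max(\|a\|,\|b\|)\, e$, while $a \vee b \geq a,b$, whence $\|a \vee b\| = \max(\|a\|,\|b\|)$. Thus $(E,e)$ is an abstract $M$-space with unit, and by Kakutani's representation theorem it is isometrically and lattice isomorphically $C(Y)$ for a compact Hausdorff space $Y$, with $e \mapsto 1_Y$. Then $S(E) \cong S(C(Y))$, and by the Riesz representation theorem the latter is the set $M_1^+(Y)$ of Radon probability measures on $Y$ with the weak$\ast$ topology. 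This is the prototypical Bauer simplex: its extreme points are precisely the Dirac measures, so $\de S(E) \cong Y$ is compact, and the representing boundary measure of a point $\nu$ is unique, corresponding to $\nu$ itself via the Riesz representation theorem.

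For the converse, let $K := S(E)$ be a Bauer simplex and consider the restriction map $r \colon A(K) \to C(\de K)$, $f \mapsto f|_{\de K}$; I would show $r$ is an isometric order isomorphism onto the lattice $C(\de K)$, which forces $A(K)$, hence $E$, to be a lattice. That $r$ is positive and, by the Bauer maximum principle ($\sup_K f = \sup_{\de K} f$ for $f \in A(K)$), isometric is immediate. The substantive point is surjectivity. Here I would use that, since $K$ is a simplex with closed extreme boundary, each $x \in K$ has a unique representing boundary measure $\mu_x$, a Radon probability measure supported on the compact set $\de K$; the barycenter map $M_1^+(\de K) \to K$ is weak$\ast$-continuous and, by existence (Choquet--Bishop--de~Leeuw) and uniqueness of $\mu_x$, a continuous bijection between compact Hausdorff spaces, hence a homeomorphism. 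Consequently $x \mapsto \mu_x$ is weak$\ast$ continuous, so for $g \in C(\de K)$ the function $\hat g(x) := \int_{\de K} g \, d\mu_x$ is continuous; it is affine because $\lambda \mu_{x_1} + (1-\lambda)\mu_{x_2}$ is again a boundary measure representing $\lambda x_1 + (1-\lambda)x_2$ and so equals $\mu_{\lambda x_1 + (1-\lambda) x_2}$ by uniqueness; and it restricts to $g$ on $\de K$ because $\mu_y = \delta_y$ for extreme $y$. Thus $\hat g \in A(K)$ with $r \hat g = g$, giving surjectivity, and the same formula shows $r^{-1}$ is positive, so $r$ is an order isomorphism.

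The hard part will be the surjectivity step in the converse, i.e.\ producing the continuous affine extension of a function on $\de K$. Everything there hinges on identifying $K$ with $M_1^+(\de K)$ through the barycenter map, which in turn requires assembling the three defining features of a Bauer simplex---existence of a representing boundary measure, its uniqueness, and the closedness of $\de K$---to upgrade the continuous bijection to a homeomorphism and thereby make $x \mapsto \mu_x$ weak$\ast$ continuous. By contrast, the $M$-space identity and the invocation of Kakutani's theorem in the forward direction are routine.
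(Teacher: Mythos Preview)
The paper does not actually prove this lemma; it merely records it as a citation to Alfsen~\cite{alfsen1971compact}, Theorem~II.4.1, and refers the reader to that reference for the argument (see the opening paragraph of Section~\ref{sec:prel}). So there is no ``paper's own proof'' to compare your attempt against.

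That said, your outline is correct and is essentially the classical argument one finds in Alfsen's book. The forward direction via the $M$-space identity and Kakutani's representation theorem is standard, and your converse---showing that the restriction map $A(K)\to C(\partial_\mathrm{e} K)$ is an isometric order isomorphism by inverting the barycenter map---is exactly the route taken in~II.4 of~\cite{alfsen1971compact}. Your identification of the crux (continuity of $x\mapsto\mu_x$, obtained by upgrading the barycenter bijection $M_1^+(\partial_\mathrm{e} K)\to K$ to a homeomorphism via compactness) is accurate. One small remark: for the surjectivity of the barycenter map you do not need the full Choquet--Bishop--de~Leeuw theorem, since $\partial_\mathrm{e} K$ is assumed closed; the simpler fact that every point of a compact convex set is the barycenter of a probability measure on $\overline{\partial_\mathrm{e} K}$ already suffices.
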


\section{Proofs of Theorems~\ref{thm:main1p} and \ref{thm:main2p}} \label{sec:proof}
In this section, we first prove Theorem~\ref{thm:main2p} 
and then Theorem~\ref{thm:main1p} by reducing to Theorem~\ref{thm:main2p}.

Let $(E,e) $ be an order-unit Banach space. 
For each effect $a \in \E ( E)$ there corresponds a $2$-outcome measurement 
$(a, e-a)$ on $(E,e) .$
Two effects $a,b \in E$ are said to be compatible if the corresponding 
$2$-outcome measurements $(a, e-a)$ and $(b ,e-b)$ are compatible.
The following lemma is immediate from the definition of the compatibility
(cf.\ \cite{PhysRevA.94.042108}, Proposition~7).
\begin{lemm} \label{lemm:2out}
Let $(E,e)$ be an order-unit Banach space.
Then effects $a,b\in \mathcal{E} (E)$ are compatible if and only if
there exists $c\in E$ such that
\begin{equation}
	0 , a+b-e \leq c \leq a,b .
	\notag
\end{equation}
\end{lemm}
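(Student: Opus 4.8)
The plan is to prove Lemma~\ref{lemm:2out} directly from the definition of compatibility for the two $2$-outcome measurements $(a,e-a)$ and $(b,e-b)$, unwinding what a joint measurement $(h_{k,j})_{1\le k,j\le 2}$ must look like. First I would suppose $a,b$ are compatible and let $(h_{k,j})$ be a witnessing family, writing $c := h_{1,1}$. The marginal conditions give $h_{1,1}+h_{1,2}=a$ and $h_{1,1}+h_{2,1}=b$, so $h_{1,2}=a-c$ and $h_{2,1}=b-c$; summing all four entries gives $\sum_{k,j}h_{k,j}=e$, hence $h_{2,2}=e-a-b+c$. Positivity of the four entries then reads exactly $c\ge 0$, $a-c\ge 0$, $b-c\ge 0$, and $e-a-b+c\ge 0$, i.e.\ $0,\,a+b-e\le c\le a,b$.

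Conversely, given $c\in E$ with $0,a+b-e\le c\le a,b$, I would define $h_{1,1}:=c$, $h_{1,2}:=a-c$, $h_{2,1}:=b-c$, $h_{2,2}:=e-a-b+c$ and simply check the three defining properties: each $h_{k,j}\ge 0$ (this is precisely the four inequalities on $c$), the row sums are $h_{1,1}+h_{1,2}=a$ and $h_{2,1}+h_{2,2}=(b-c)+(e-a-b+c)=e-a$, and the column sums are $h_{1,1}+h_{2,1}=b$ and $h_{1,2}+h_{2,2}=(a-c)+(e-a-b+c)=e-b$. Thus $(h_{k,j})$ is a joint measurement of $(a,e-a)$ and $(b,e-b)$, so $a$ and $b$ are compatible.

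There is essentially no obstacle here: the lemma is a bookkeeping identity, and the only thing to be slightly careful about is that a joint measurement of two $2$-outcome measurements is determined by its single entry $h_{1,1}$ once the marginals are fixed, which is what makes the equivalence a clean statement about one element $c$. One could also remark that the interval $[0\vee(a+b-e),\,a\wedge b]$ in the statement is shorthand for the four scalar-free inequalities displayed, and no lattice structure on $E$ is presumed. I would keep the write-up to these two short directions.
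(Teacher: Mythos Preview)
Your argument is correct and is exactly the unwinding-of-the-definition that the paper has in mind: the paper does not spell out a proof at all, stating only that the lemma is ``immediate from the definition of the compatibility'' (with a reference to Pl\'avala's Proposition~7), and your two directions are precisely that immediate verification. Nothing needs to be added or changed.
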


We also introduce the following notion of orthogonality.
\begin{defi} \label{def:ortho}
Positive elements $a $ and $b$ in an ordered linear space $E$ is said to be 
\textit{orthogonal} if 
\[
	0 \leq c \leq a,b \implies c=0
\]
for any $c\in E .$ 
It readily follows that 
for positive elements $a,b \in E$ and positive reals $\alpha , \beta >0 ,$
$a$ and $b$ are orthogonal if and only if $\alpha a$ and $\beta b$ are orthogonal.
\end{defi}
Then we have
\begin{lemm} \label{lemm:ortho}
Let $(E,e)$ be an order-unit Banach space satisfying the condition~\eqref{i3}
of Theorem~\ref{thm:main2p}
and let $a,b \in E$ be orthogonal positive elements.
Suppose that $a \neq 0 .$
Then for any
$\psi \in S(E)$
with $\psi (a) = \|a\|   ,$
it holds that $\psi (b) =0.$
\end{lemm}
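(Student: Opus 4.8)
The plan is to normalize the orthogonal pair $a,b$ to effects $a',b'$, combine the compatibility hypothesis with orthogonality to obtain $a' + b' \leq e$, and then read off the conclusion by evaluating the state. If $b = 0$ there is nothing to prove, so assume $b \neq 0$; since also $a \neq 0$, both $\|a\|$ and $\|b\|$ are strictly positive. As $a, b \geq 0$, the normalized elements $a' := a/\|a\|$ and $b' := b/\|b\|$ lie in $\E(E)$ (here one uses the Archimedean property to pass from $\|a\| = \inf\{\lambda > 0 : a \le \lambda e\}$ to $a \le \|a\| e$), and by the scaling remark in Definition~\ref{def:ortho} they remain orthogonal. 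Since $\psi(a) = \|a\|$ is equivalent to $\psi(a') = 1$ and $\psi(b) = 0$ is equivalent to $\psi(b') = 0$, it suffices to work with $a'$ and $b'$.

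The key step is to deduce $a' + b' \leq e$. By condition~\eqref{i3}, the $2$-outcome measurements $(a', e - a')$ and $(b', e - b')$ are compatible, i.e.\ the effects $a'$ and $b'$ are compatible, so Lemma~\ref{lemm:2out} provides some $c \in E$ with $0, \, a' + b' - e \leq c \leq a', b'$. From $0 \leq c \leq a', b'$ and orthogonality of $a'$ and $b'$ we get $c = 0$, and hence $a' + b' - e \leq 0$, which is the asserted inequality.

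To conclude, let $\psi \in S(E)$ satisfy $\psi(a') = 1$. Applying $\psi$ to $a' + b' \leq e$ yields $1 + \psi(b') \leq \psi(e) = 1$, so $\psi(b') \leq 0$; since $\psi$ is a state and $b' \geq 0$ we also have $\psi(b') \geq 0$, whence $\psi(b') = 0$ and therefore $\psi(b) = 0$. I expect the only step with genuine content to be the middle one, where compatibility (through Lemma~\ref{lemm:2out}) and orthogonality together force $a' + b' \leq e$; the normalization and the final evaluation are routine.
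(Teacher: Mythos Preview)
Your proof is correct and follows essentially the same approach as the paper's: normalize to effects, use compatibility via Lemma~\ref{lemm:2out} to obtain an element $c$ with $0, a'+b'-e \le c \le a', b'$, kill $c$ using orthogonality, and evaluate the state. The only cosmetic difference is that the paper scales $c$ up by $\min(\|a\|,\|b\|)$ to invoke orthogonality of $a,b$ directly, whereas you invoke the scaling remark in Definition~\ref{def:ortho} to get orthogonality of $a',b'$; these are equivalent.
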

\begin{proof}
The statement is obvious when $b=0$ and we assume $b \neq 0 .$
Then since the effects $\| a \|^{-1} a  $ and $\| b \|^{-1} b$ are 
compatible, by Lemma~\ref{lemm:2out} there exists $c\in E$ such that
\[
	0 , \| a\|^{-1} a + \| b \|^{-1} b -e \leq c \leq \| a \|^{-1} a  , \| b \|^{-1} b .
\]
This implies 
\[
	0 \leq \min (\| a \| , \| b \|)c  \leq a, b
\]
and hence the orthogonality of $a$ and $b$ implies $c=0 .$
Therefore 
\[
	\| a\|^{-1}a + \| b\|^{-1}b -e \leq 0 .
\]
Thus for any state $\psi \in S(E)$ with $\psi (a) = \| a\|$ we have
\[
	0 \leq \| b\|^{-1} \psi (b) 
	\leq \psi (e - \|a\|^{-1} a  )
	=0 ,
\]
which implies $\psi(b) =0 .$
\qed \end{proof}

\noindent
\textit{Proof of Theorem~\ref{thm:main2p}.}

\eqref{i1}$\implies$\eqref{i2} is obvious.

\eqref{i2}$\implies$\eqref{i1}.
This can be proved in the same way as in Proposition~II.3.2 of \cite{alfsen1971compact}
by noting Lemma~\ref{lemm:bauer} and 
that the set 
\[
	\set{w \in E | x,y \leq w \leq z} = [x,z] \cap [y,z]
\]
for any $x,y,z\in E$ is weakly$\ast$ compact.

\eqref{i2}$\implies$\eqref{i3}.
Assume \eqref{i2} and  
take arbitrary $2$-outcome measurements 
$(a_1 , a_2) = (a_1, e-a_1)$ and $(b_1 , b_2 ) = (b_1 , e-b_1)  $ on 
$(E, e).$
Then by $a_1 \leq e = b_1 + b_2$ and the Riesz decomposition property of $E,$
there exist elements $c_{1,1} , c_{1,2} \in E$ such that
\[
	0 \leq c_{1,j} \leq b_j \, (j=1,2), \quad 
	a_1 = c_{1,1} + c_{1,2} .
\]
If we put $c_{2,j} := b_j - c_{1,j} \geq 0 ,$ we have 
\begin{gather*}
	b_j = c_{1,j} + c_{2,j} \quad (j=1,2),
	\\
	a_2 = e- a_1 = b_1 + b_2 - c_{1,1} - c_{1,2} = c_{2,1} + c_{2,2} .
\end{gather*}
Thus $(a_1 , a_2)$ and $(b_1 , b_2)$ are compatible.

\eqref{i3}$\implies$\eqref{i2}.
We assume \eqref{i3} and prove the Riesz decomposition property of $E.$
Take elements $u,v_1, v_2 \in E$ satisfying
\[
	0 \leq u \leq v_1 +  v_2 , \quad 0 \leq v_1, v_2 .
\]
Let
\[
	A:= \set{(x_1, x_2) \in E\times E | 0 \leq x_j \leq v_j \, (j=1,2) , \,  x_1 + x_2 \leq u  },
\]
which is non-empty by $(0,0) \in A ,$
and define a partial order $\leq $ on $A$ by
\[
	(x_1 , x_2 ) \leq (y_1, y_2)
	\, :\defarrow \,
	x_1 \leq y_1 \text{ and } x_2 \leq y_2 
	\quad
	((x_1 , x_2 ) , (y_1, y_2) \in A).
\] 
Let $D$ be a directed subset of $A$ and for each $\delta \in D$ 
we write as
$\delta = (x_1^\delta , x_2^\delta) .$
Then for each $j=1,2$ the net $(x_j^\delta)_{\delta \in D}$ in $E$
is increasing and upper bounded by $v_j .$
Thus by Lemma~\ref{lemm:sup} 
\[
x_j^\delta \wsto  \sup_{\delta \in D}  x_j^\delta =: x_j \in E.
\]
By the weak$\ast$ closedness of $E_+ ,$ we have $(x_1 , x_2) \in A$
and hence $(x_1 , x_2)$ is a supremum of $D$ in $A .$
Therefore by Zorn\rq{}s lemma there exists an element $(u_1 , u_2) \in A$
maximal with respect to the order $\leq $ on $A .$

We next prove that $u-u_1 - u_2 \geq 0$ and $v_1 - u_1 \geq 0$ are orthogonal.
Take $c \in E$ such that
\[
	0 \leq c \leq u-u_1 - u_2 ,v_1 - u_1 .
\]
Then this implies $(u_1 , u_2) \leq (u_1 +c , u_2) \in A$
and hence by the maximality of $(u_1 , u_2)$ we have $c=0 .$
Therefore $u-u_1 - u_2$ and $v_1 - u_1$ are orthogonal.
Similar proof also applies to the orthogonality of 
$u-u_1 - u_2$ and $v_2 - u_2 .$

Now assume $u-u_1 -u_2 \neq 0.$
We then take a state $\psi \in S(E)$ such that 
\[
\psi (u-u_1 -u_2) = \| u-u_1 -u_2 \| > 0 .
\]
Then by Lemma~\ref{lemm:ortho} we have
$
	\psi (v_1 - u_1) = \psi(v_2 - u_2) = 0
$
since $u-u_1 -u_2$ and $v_j - u_j $ are orthogonal $(j=1,2) .$
Hence, by noting $u \leq v_1 + v_2,$ we have
\[
	0 <  \psi (u-u_1 -u_2)
	=
	\psi (u - v_1 - v_2) 
	\leq 0 ,
\]
which is a contradiction.
Therefore $u=u_1 +u_2 ,$
which proves the Riesz decomposition property of $E .$
\qed

\noindent
\textit{Proof of Theorem~\ref{thm:main1p}.}
The proof of \eqref{i2}$\implies$\eqref{i3} in Theorem~\ref{thm:main2p} 
also applies to 
the \lq\lq{}only if\rq\rq{} part of the statement.
To establish the \lq\lq{}if\rq\rq{} part, we assume that any pair of 
$2$-outcome measurements on $(E,e)$ is compatible
and show that $E^\ast$ is a lattice. 

We first prove that any $2$-outcome measurements on $(E^\aast ,e)$ is 
compatible.
Since each $2$-outcome measurements corresponds to an effect by
\[
	\mathcal{E} (E^\aast) \ni x^\pprime 
	\mapsto 
	(x^\pprime , e-x^\pprime)
	,
\]
we have only to prove that any effects $a^\pprime , b^\pprime \in \E(E^\aast)$
are compatible.
By Lemma~\ref{lemm:dense} there exist nets $(a_i)_\iin$ and $(b_i)_\iin $
in $\E (E)$ such that $a_i \wsto a^\pprime$ and $b_i \wsto b^\pprime .$
Since the effects $a_i$ and $b_i$ are compatible for each $\iin ,$
there exists $c_i \in E$ such that
\[
	0, a_i + b_i - e \leq c_i \leq a_i , b_i .
\]
Since $c_i$ is bounded, the Banach-Alaoglu theorem implies that 
there exists a subnet of $(c_i)_\iin$ weakly$\ast$ converging to some 
$c^\pprime \in E^\aast .$
By the weak$\ast$ closedness of the positive cone $E_+^\aast ,$
we have 
\[
	0 , a^\pprime + b^\pprime - e \leq c^\pprime \leq a^\pprime , b^\pprime ,
\]
which implies the compatibility of the effects $a^\pprime$ and $b^\pprime.$

Now by Theorem~\ref{thm:main2p},
$E^\aast$ is a lattice and hence an AM-space and 
by Corollary~1 of \cite{Ellis1964duality}
$E^\ast$ is a AL-space and so is a lattice,
which completes the proof.
\qed



\begin{thebibliography}{1}
\providecommand{\url}[1]{{#1}}
\providecommand{\urlprefix}{URL }
\expandafter\ifx\csname urlstyle\endcsname\relax
  \providecommand{\doi}[1]{DOI~\discretionary{}{}{}#1}\else
  \providecommand{\doi}{DOI~\discretionary{}{}{}\begingroup
  \urlstyle{rm}\Url}\fi

\bibitem{alfsen1971compact}
Alfsen, E.M.: {Compact Convex Sets and Boundary Integrals}.
\newblock Springer (1971)

\bibitem{Ellis1964duality}
Ellis, A.J.: The duality of partially ordered normed linear spaces.
\newblock J. London Math. Soc. \textbf{39}(1), 730--744 (1964).

\bibitem{1751-8121-49-12-123001}
Heinosaari, T., Miyadera, T., Ziman, M.: An invitation to quantum
  incompatibility.
\newblock J. Phys. A: Math. Theor. \textbf{49}(12), 123001 (2016).

\bibitem{Olubummo1999}
Olubummo, Y., Cook, T.A.: {The Predual of an Order-Unit Banach Space}.
\newblock Int. J. Theor. Phys. \textbf{38}(12), 3301--3303 (1999).

\bibitem{PhysRevA.94.042108}
Pl\'avala, M.: All measurements in a probabilistic theory are compatible if and
  only if the state space is a simplex.
\newblock Phys. Rev. A \textbf{94}, 042108 (2016).

\end{thebibliography}

\end{document}